\documentclass[12pt,leqno,fleqn]{amsart}
\usepackage{hyperref}
\usepackage{amsmath,amstext,amsthm,amssymb} 




\usepackage{float}
\usepackage{amsrefs}
\usepackage{ulem}
\hypersetup{
    colorlinks=true,       
    linkcolor=blue,          
    citecolor=magenta,        
    filecolor=magenta,      
    urlcolor=cyan,          
}

\usepackage{pgf,pgfarrows} 
\usepackage{tikz}
\usetikzlibrary[decorations.pathreplacing]

\setlength{\textwidth}{16.6cm}
\setlength{\topmargin}{0cm}
\setlength{\oddsidemargin}{0cm}
\setlength{\evensidemargin}{0cm}
\allowdisplaybreaks
\swapnumbers

\allowdisplaybreaks
\swapnumbers


\theoremstyle{plain} 
\newtheorem{lemma}[equation]{Lemma} 
\newtheorem{proposition}[equation]{Proposition} 
\newtheorem{theorem}[equation]{Theorem} 
\newtheorem{corollary}[equation]{Corollary}

\theoremstyle{definition}
\newtheorem{definition}[equation]{Definition} 

\theoremstyle{remark}
\newtheorem{remark}[equation]{Remark}

\numberwithin{equation}{section}

\def\norm#1.#2.{\lVert#1\rVert_{#2}}
\def\Norm#1.#2.{\bigl\lVert#1\bigr\rVert_{#2}}
\def\NOrm#1.#2.{\Bigl\lVert#1\Bigr\rVert_{#2}}
\def\NORm#1.#2.{\biggl\lVert#1\biggr\rVert_{#2}}
\def\NORM#1.#2.{\Biggl\lVert#1\Biggr\rVert_{#2}}


\def\ip#1,#2,{\langle #1,#2\rangle}
\def\Ip#1,#2,{\langle#1,#2\rangle}
\def\IP#1,#2,{\langle#1,#2\rangle}

\def\mid{\,:\,}


\def\XXint#1#2#3{{\setbox0=\hbox{$#1{#2#3}{\int}$}
     \vcenter{\hbox{$#2#3$}}\kern-.5\wd0}}


\newcommand{\zat}{\mathbb{Z}}

\newcommand{\re}{\operatorname{Re}}
\newcommand{\im}{\operatorname{Im}}
\newcommand{\qal}{|Q_{I}|_{\alpha}}
\newcommand{\lpwal}{L^{p}(wdA_{\alpha})}
%

%

%
%
%

%
%
%
%
%
%

\begin{document}

\title[Sharp B{\'e}koll{\'e} estimates for the Bergman Projection]{Sharp B{\'e}koll{\'e} estimates for the Bergman Projection}
\subjclass[2010]{Primary: 47B38, 30H20 Secondary: 42C40, 42A61,42A50 }
\keywords{Bergman spaces, B{\'e}koll{\'e} weights}


\author[S. Pott]{Sandra Pott}
\thanks{}
\address{Centre for Mathematical Sciences, University of Lund, Lund, Sweden}
\email {sandra@maths.lth.se}

\author[M.C. Reguera]{Maria Carmen Reguera}
\thanks{Supported by Lund University, Mathematics in the Faculty of Science with a postdoctoral research grant}
\address{Centre for Mathematical Sciences, University of Lund, Lund, Sweden}
\email {mreguera@maths.lth.se}


\begin{abstract}
We prove sharp estimates for the Bergman projection in weighted Bergman spaces in terms of the B{\'e}koll{\'e} constant. Our main tools are a dyadic model dominating the operator and an adaptation of a method of Cruz-Uribe, Martell and P\'erez.
\end{abstract}

\maketitle

\section{Introduction}

For $1<p<\infty$ and $-1<\alpha< \infty$, we consider the weighted Bergman space $A^{p}_{\alpha}(\mathcal H)$, defined as the space of analytic functions in $L^{p}( dA_{\alpha})$, where $ dA_{\alpha}(z)=(\im z)^{\alpha}dA(z)$ and $dA$ stands for the area measure in the upper half plane $\mathcal H=\{z\in \mathbb C\mid \, \im z>0\}$.

We consider the natural projection onto these spaces, i.e., the projection from $L^{2}( dA_{\alpha})$ onto $A^{2}_{\alpha}(\mathcal H)$, also known as the Bergman projection. It is easy to obtain the following integral representation for the Bergman projection,
\begin{equation}\label{e.pbp}
P_{\alpha}f(z)=c_{\alpha}\int_{\mathcal H} \frac{f(\xi)}{(z-\bar{\xi})^{2+\alpha}} dA_{\alpha}(\xi),
\end{equation}
where $c_{\alpha}$ is an appropriate real constant.
A classical result states that $P_{\alpha}\mid  L^{p}( dA_{\alpha})\mapsto  A^{p}_{\alpha}(\mathcal H)$. Moreover if we consider the maximal Bergman projection
\begin{equation}\label{e.pbpplus}
P_{\alpha}^{+}f(z)=\int_{\mathcal H} \frac{|f|(\xi)}{|z-\bar{\xi}|^{2+\alpha}} dA_{\alpha}(\xi),
\end{equation}
then $P_{\alpha}^{+} \mid  L^{p}( dA_{\alpha})\mapsto  L^{p}( dA_{\alpha})$ as well, thus indicating that the role of cancellation in this family of operators is very small. For these classical results see e.g. \cite{MR1758653}.

The boundedness of the Bergman projection in weighted $L^{p}$ spaces was studied by B{\'e}koll{\'e} and Bonami in \cite{MR497663} and \cite{MR667319}. The
 main result is stated in the following

\begin{theorem}[B{\'e}koll{\'e}-Bonami]
\label{t.bandb}
Let $1<p<\infty$ and $-1<\alpha< \infty$, let $w$ be a positive locally integrable function. The following assertions are equivalent:
\begin{enumerate}
\item 
$$
P_{\alpha}\mid  L^{p}( wdA_{\alpha})\mapsto L^{p}( wdA_{\alpha})
$$
\item 
$$
P_{\alpha}^{+}\mid  L^{p}( wdA_{\alpha})\mapsto L^{p}( wdA_{\alpha})
$$
\item 
$$
w\in B_{p, \alpha}
$$
\end{enumerate}
where the $B_{p, \alpha}$ class of weights is defined as the family of positive locally integrable functions for which
\begin{equation}
\label{e.bekolle}
B_{p, \alpha}(w):=\sup_{I\subset \mathbb R, \text{ interval}}\frac{|Q_{I}|_{w,\alpha}}{|Q_{I}|_{\alpha}} \left(\frac{|Q_{I}|_{w^{1-p'}, \alpha}}{|Q_{I}|_{\alpha}}\right)^{p-1}<\infty,
\end{equation}
where $Q_{I}$ is the Carleson box associated to interval $I$ (see \eqref{e.cbox}), $|Q_{I}|_{\alpha}:=\int_{Q_{I}} dA_{\alpha}\approx|Q_{I}|^{1+\alpha/2}$ and $|Q_{I}|_{w,\alpha}=\int_{Q_{I}} wdA_{\alpha}$.

\end{theorem}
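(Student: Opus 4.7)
The plan is to establish the chain $(2) \Rightarrow (1) \Rightarrow (3) \Rightarrow (2)$. The implication $(2) \Rightarrow (1)$ is immediate from the pointwise bound $|P_\alpha f(z)| \le P_\alpha^+ f(z)$, obtained by moving the absolute value inside the integral representation \eqref{e.pbp}.

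\emph{From $(1)$ to $(3)$.} Fix an interval $I \subset \mathbb R$. The key geometric fact is that for $z,\xi \in Q_I$ one has $|z - \bar\xi| \lesssim |I|$, so the kernel $|z-\bar\xi|^{-(2+\alpha)}$ is bounded below by a constant multiple of $|Q_I|_\alpha^{-1}$ on $Q_I \times Q_I$. To extract the $B_{p,\alpha}$ condition from the boundedness of $P_\alpha$ itself (rather than $P_\alpha^+$), I would test on a function of the form $f = g\,\chi_{Q_I}$ with $g$ chosen so that the phase of $(z-\bar\xi)^{-(2+\alpha)}$ is essentially constant on $Q_I$, turning the oscillatory integral into one comparable to $P_\alpha^+(|f|)$. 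Taking $g$ to be a suitable multiple of $w^{1-p'}$, integrating $|P_\alpha f|^p w\,dA_\alpha$ over $Q_I$, and using the hypothesized norm bound, yields after optimisation the B\'ekoll\'e inequality with $B_{p,\alpha}(w) \lesssim \|P_\alpha\|_{L^p(w dA_\alpha) \to L^p(w dA_\alpha)}^{p}$.

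\emph{From $(3)$ to $(2)$.} This is the main part. Following the dyadic-model strategy advertised in the abstract, I would first introduce a family of (shifted) dyadic intervals on $\mathbb R$ and their associated Carleson boxes, which inherit a natural tree structure. The guiding pointwise fact is that $|z-\bar\xi|^{-(2+\alpha)} \approx |Q|_\alpha^{-1}$ whenever $Q$ is the smallest dyadic Carleson box containing both $z$ and $\xi$. After averaging over finitely many shifted grids this produces a domination of $P_\alpha^+$ by a positive dyadic operator $T_\alpha$ built from the $\alpha$-averages $|Q|_\alpha^{-1}\int_Q |f|\,dA_\alpha$ indexed by dyadic Carleson boxes. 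Reducing the problem to an $L^p(w\,dA_\alpha)$ bound for $T_\alpha$, I would then adapt the Cruz-Uribe--Martell--P\'erez scheme: a Calder\'on--Zygmund-type stopping-time argument selects a sparse subfamily of boxes on which the relevant averages grow geometrically, and the $B_{p,\alpha}$ hypothesis, applied both to $w$ and to the dual weight $w^{1-p'}$, converts this sparse control into the desired weighted norm inequality.

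\emph{Main obstacle.} The most delicate step is the dyadic reduction: one must verify that nested grids, covering lemmas and sparse decompositions transfer faithfully from classical dyadic cubes to Carleson boxes carrying the anisotropic measure $dA_\alpha$ with $|Q_I|_\alpha \approx |I|^{2+\alpha}$, and that the domination of $P_\alpha^+$ by $T_\alpha$ survives this passage. Once the dyadic framework is in place, the Cruz-Uribe--Martell--P\'erez argument proceeds largely in parallel with the Euclidean setting, but the quantitative bookkeeping of how $B_{p,\alpha}(w)$ controls the sparse sums is precisely what is needed for the sharp form of the estimate which is the paper's stated goal.
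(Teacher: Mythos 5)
First, note that the paper does not actually prove Theorem \ref{t.bandb}: it is quoted from B\'ekoll\'e--Bonami \cite{MR497663}, \cite{MR667319}, and only the chain $(3)\Rightarrow(2)\Rightarrow(1)$ is effectively re-proved here, in the quantitative form of Theorem \ref{t.bekolle}. For that chain your plan is workable but heavier than what the paper does: after the dyadic domination (Proposition \ref{p.pointwise}, which matches your sketch), no stopping-time or sparse selection is needed. The paper instead uses that $|Q_I|_\alpha\approx |T_I|_\alpha$ and that the top halves $\{T_I\}_{I\in\mathcal D^\beta}$ tile $\mathcal H$ (Remark \ref{r.tiling}); this converts the bilinear dyadic sum directly into $\int_{\mathcal H}(M_{w^{-1},\alpha}f)(M_{w,\alpha}g)\,dA_\alpha$, and the weight-independent maximal theorem finishes. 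Sparse families are dispensable here, unlike in the Euclidean $A_p$ theory, precisely because every Carleson box owns a subregion of comparable $dA_\alpha$-measure disjoint from that of every other box in the grid; your ``main obstacle'' paragraph points at a difficulty that the Carleson-box geometry removes rather than creates.

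The genuine gap is in your step $(1)\Rightarrow(3)$. The phase of $(z-\bar\xi)^{-(2+\alpha)}$ depends on $z$ as well as on $\xi$, so it cannot be made ``essentially constant on $Q_I$'' by a choice of the test function $g$ alone: for $z,\xi$ ranging over the same box $Q_I$ the argument of $z-\bar\xi$ sweeps essentially all of $(0,\pi)$, and after raising to the power $2+\alpha$ the real part of the kernel changes sign, so $|P_\alpha(g\chi_{Q_I})|$ need not dominate $P_\alpha^{+}(|g|\chi_{Q_I})$ on $Q_I$ for any unimodular correction built into $g$. The standard repair --- and the one the paper itself uses for the lower bounds in Section 5 --- is to keep $f=w^{1-p'}\chi_{Q_I}$ but to integrate $|P_\alpha f|^{p}w\,dA_\alpha$ over a region separated from $Q_I$ (a congruent box at distance $\approx|I|$, or the rescaled region $\{|z|\ge M_\alpha|I|\}$), chosen so that $\arg\bigl((z-\bar\xi)^{2+\alpha}\bigr)$ varies by at most $\pi/2$ as $\xi$ runs over $Q_I$, whence $|P_\alpha f(z)|\gtrsim P_\alpha^{+}f(z)$ there. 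One then has to combine the resulting off-diagonal inequalities over neighbouring boxes to recover the single-box condition \eqref{e.bekolle}; this is where the necessity argument actually lives, and your sketch skips it.
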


Our aim in this paper is to study the dependence of the operator norm,\\  $\displaystyle \norm P_{\alpha}.L^{p}(wdA_{\alpha})\mapsto L^{p}(wdA_{\alpha}).$ on the $B_{p, \alpha}(w)$ constant. We first find estimates for the maximal operator $P_{\alpha}^{+}$, which allows us to deduce the same estimates for $P_{\alpha}$. Our main theorem is the following:

\begin{theorem}
\label{t.bekolle}
For $1<p<\infty$, let $w\in B_{p, \alpha}$ be a B{\'e}koll{\'e} weight with constant $B_{p, \alpha}(w)$ and let $P_{\alpha}^{+}$ be the maximal Bergman projection. Then
\begin{equation}
\label{e.sharpbeko}
\norm P_{\alpha}^{+}f. L^{p}(wdA_{\alpha}).\leq C B_{p, \alpha}(w)^{\max(1, \frac{1}{p-1})} \norm f. L^{p}(wdA_{\alpha}).,
\end{equation}
where the constant $C$ depends only on $p$ and $\alpha$.
\end{theorem}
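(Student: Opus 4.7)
The plan is to follow the two-step strategy announced in the introduction: first replace $P_\alpha^+$ with a positive dyadic model that is easier to analyse, then prove the sharp weighted bound for the model by adapting the Cruz-Uribe--Martell--P\'erez (CMP) method to the B\'ekoll\'e setting.

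First I would fix finitely many shifted dyadic grids $\mathcal D$ of intervals on $\mathbb R$ so that every Euclidean disk in $\mathcal H$ is comparable to some $Q_I$ (this is the Bergman analog of the ``three grid'' trick). For each $I\in\mathcal D$, write $\hat Q_I = Q_I\setminus\bigcup_{J\in\mathcal D,\,J\subsetneq I}Q_J$ for the top half. The geometric input is: for $\xi\in Q_I$ and $z\in\hat Q_I$, the kernel satisfies $|z-\bar\xi|^{2+\alpha}\approx |I|^{2+\alpha}\approx |Q_I|_\alpha$. Summing the kernel bound over the grid and using that the $\hat Q_I$ partition $\mathcal H$, one gets the pointwise domination
\begin{equation*}
P_\alpha^+ f(z) \;\lesssim\; T_\alpha^{\mathcal D} f(z) \;:=\; \sum_{I\in\mathcal D}\frac{\chi_{\hat Q_I}(z)}{|Q_I|_\alpha}\int_{Q_I}|f|\,dA_\alpha,
\end{equation*}
up to summation over the finitely many shifted grids. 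So Theorem~\ref{t.bekolle} is reduced to the same inequality for $T_\alpha^{\mathcal D}$.

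The next step is a Buckley-type bound for the associated dyadic maximal operator
\begin{equation*}
M_\alpha^{\mathcal D} f(z) = \sup_{\substack{I\in\mathcal D\\ z\in \hat Q_I}} \frac{1}{|Q_I|_\alpha}\int_{Q_I}|f|\,dA_\alpha.
\end{equation*}
Since the $\hat Q_I$ provide a nested filtration in which $dA_\alpha$ behaves like a doubling measure on Carleson boxes, the standard Calder\'on--Zygmund stopping time proof of Buckley's theorem carries over and yields $\norm M_\alpha^{\mathcal D} f.L^p(w\,dA_\alpha).\lesssim B_{p,\alpha}(w)^{1/(p-1)}\norm f.L^p(w\,dA_\alpha).$, with the B\'ekoll\'e condition \eqref{e.bekolle} playing exactly the role of $A_p$. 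A direct check also shows that the dual weight $\sigma := w^{1-p'}$ satisfies $B_{p',\alpha}(\sigma)=B_{p,\alpha}(w)^{1/(p-1)}$, which is where the exponent $\max(1,1/(p-1))$ will ultimately come from.

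With those ingredients in hand, I would apply the CMP scheme. By duality it is enough to treat $p\ge 2$; for $p<2$ one dualises, passes to $\sigma$, and uses the $B_{p',\alpha}(\sigma)$ identity above. Setting $\sigma=w^{1-p'}$ and testing against $g\in L^{p'}(w\,dA_\alpha)$,
\begin{equation*}
\int_{\mathcal H} T_\alpha^{\mathcal D} f\cdot g\, w\,dA_\alpha \;=\; \sum_{I\in\mathcal D}\frac{1}{|Q_I|_\alpha}\Bigl(\int_{Q_I}f\,dA_\alpha\Bigr)\Bigl(\int_{\hat Q_I}g w\,dA_\alpha\Bigr).
\end{equation*}
I would then insert and remove factors of $\sigma$ and $w$ so that each average is taken with respect to the correct weight, use the B\'ekoll\'e constant once to exchange $|Q_I|_\alpha$ for $|Q_I|_{w,\alpha}^{1/p}|Q_I|_{\sigma,\alpha}^{1/p'}$, and build a sparse subfamily so that the resulting bilinear sum is dominated by $\int (M^\sigma_\alpha (f\sigma^{-1}))(M^w_\alpha(g))\,\sigma^{1/p'}w^{1/p}\,dA_\alpha$. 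Two applications of H\"older together with the weighted-maximal bound of the previous paragraph (applied on both sides) close the estimate with total exponent $1$ on $B_{p,\alpha}(w)$, proving Theorem~\ref{t.bekolle} in the range $p\ge 2$.

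The main obstacle I anticipate is the second step: the CMP machinery is built for cubes in $\mathbb R^n$ with Lebesgue measure, and here one must work with Carleson boxes, the non-doubling (in the usual sense) measure $dA_\alpha$, and the asymmetric B\'ekoll\'e class. Verifying that the stopping-time construction, the sparse family, and the self-duality identity $B_{p',\alpha}(w^{1-p'})=B_{p,\alpha}(w)^{1/(p-1)}$ all survive the transfer to the Bergman setting (quantitatively, with the correct dependence on $B_{p,\alpha}(w)$) is the principal technical work; once it is in place, the dyadic domination of step~1 delivers the theorem.
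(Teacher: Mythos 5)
There is a genuine gap in your first step, and it is fatal as written. Your model operator $T_\alpha^{\mathcal D}f=\sum_{I}\frac{\chi_{\hat Q_I}}{|Q_I|_\alpha}\int_{Q_I}|f|\,dA_\alpha$ localizes the \emph{output} to the top halves $\hat Q_I$, which tile $\mathcal H$; hence for each $z$ exactly one term survives and $T_\alpha^{\mathcal D}f(z)=|Q_{I_z}|_\alpha^{-1}\int_{Q_{I_z}}|f|\,dA_\alpha$, where $I_z$ is the unique interval of the grid with $z\in\hat Q_{I_z}$ (so $|I_z|\approx\operatorname{Im}z$). This quantity depends only on $f|_{Q_{I_z}}$, so it cannot dominate $P_\alpha^+f(z)=\int_{\mathcal H}|f(\xi)|\,|z-\bar\xi|^{-(2+\alpha)}dA_\alpha(\xi)$, which sees all of $f$: take $f$ supported outside $Q_{I_z}$ and your right-hand side vanishes while the left-hand side does not. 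More structurally, $T_\alpha^{\mathcal D}f\le M_\alpha f$ pointwise, and $M_\alpha$ satisfies the Buckley bound with exponent $1/(p-1)$ on $B_{p,\alpha}(w)$; for $p>2$ this is strictly smaller than the exponent $\max(1,1/(p-1))=1$, which Section 5 of the paper shows is attained by $P_\alpha$. So no finite sum of operators of your form can dominate $P_\alpha^+$, even at the level of norms. The correct model (Definition \ref{d.dyadicmodel}) keeps the full indicator $1_{Q_I}$ on the output and sums over \emph{all} scales, $Q_\alpha^\beta f=\sum_{I}\langle f,1_{Q_I}|I|^{-(2+\alpha)}\rangle_\alpha\,1_{Q_I}$: for a fixed pair $(z,\xi)$ one needs the term at the scale $|I|\approx|z-\bar\xi|$, and as $\xi$ ranges over $\mathcal H$ every scale occurs.

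The top halves do enter the argument, but later and in the other slot: in the bilinear estimate one replaces $|Q_I|_\alpha$ by the comparable quantity $|T_I|_\alpha$ and uses the tiling $\{T_I\}_{I\in\mathcal D}$ to bound $\sum_I|T_I|_\alpha\,\langle f\rangle^{w^{-1}}_{Q_I}\langle g\rangle^{w}_{Q_I}$ by $\int_{\mathcal H} (M_{w^{-1},\alpha}f)(M_{w,\alpha}g)\,dA_\alpha$; this is exactly where the automatic ``sparseness'' of Carleson boxes is exploited, so no stopping-time or sparse-subfamily construction is needed. Your remaining outline (Buckley-type bound for the weighted maximal function, duality for $p<2$, the Cruz-Uribe--Martell--P\'erez bilinear scheme) is in the right spirit --- the paper runs the $p=2$ case in essentially this way and then extrapolates to $p>2$, whereas you propose a direct argument for all $p\ge 2$, which is a viable alternative --- but none of it can be executed until the model operator and the domination step are corrected.
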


\begin{corollary}\label{c.sharpb}
For $1<p<\infty$, let $w\in B_{p, \alpha}$ be a B{\'e}koll{\'e} weight with constant $B_{p, \alpha}(w)$ and let $P_{\alpha}$ be the Bergman projection. Then
\begin{equation}
\label{e.sharpbekoc} 
\norm P_{\alpha}f. L^{p}(wdA_{\alpha}).\leq C B_{p}(w)^{\max(1, \frac{1}{p-1})} \norm f. L^{p}(wdA_{\alpha})..
\end{equation}
\end{corollary}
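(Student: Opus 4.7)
The plan is to reduce Corollary~\ref{c.sharpb} directly to Theorem~\ref{t.bekolle} by a pointwise comparison between $P_\alpha$ and $P_\alpha^+$. The only mild subtlety is that the kernel $(z-\bar\xi)^{-(2+\alpha)}$ appearing in the integral representation~\eqref{e.pbp} involves a non-integer complex power, so one must first fix a branch and verify that its modulus is just the corresponding real power of $|z-\bar\xi|$.

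First I would observe that for $z,\xi\in\mathcal H$ one has $\im(z-\bar\xi)=\im z+\im\xi>0$, so $z-\bar\xi$ stays in the open upper half plane. Hence a single-valued holomorphic branch of $\zeta\mapsto\zeta^{2+\alpha}$ can be chosen on this half plane, and by construction $|(z-\bar\xi)^{2+\alpha}|=|z-\bar\xi|^{2+\alpha}$. Inserting this into~\eqref{e.pbp} and pulling the absolute value inside the integral produces the pointwise bound
\begin{equation*}
|P_\alpha f(z)|\le |c_\alpha| \int_{\mathcal H}\frac{|f(\xi)|}{|z-\bar\xi|^{2+\alpha}}\,dA_\alpha(\xi)=|c_\alpha|\,P_\alpha^+ f(z).
\end{equation*}

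Taking the $L^p(wdA_\alpha)$ norm of both sides and applying Theorem~\ref{t.bekolle} to the right-hand side then gives
\begin{equation*}
\norm P_\alpha f.L^p(wdA_\alpha). \le |c_\alpha|\norm P_\alpha^+ f.L^p(wdA_\alpha). \le C\,B_{p,\alpha}(w)^{\max(1,\frac{1}{p-1})}\norm f.L^p(wdA_\alpha).,
\end{equation*}
where the final constant $C$ absorbs $|c_\alpha|$ and hence depends only on $p$ and $\alpha$. This is precisely the estimate claimed in~\eqref{e.sharpbekoc}.

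Since the argument is nothing more than a triangle-inequality step followed by an invocation of the main theorem, there is no real obstacle at the level of the corollary: all the substantive work (the dyadic domination of $P_\alpha^+$ and the Cruz-Uribe--Martell--P\'erez-style argument) has already been carried out in the proof of Theorem~\ref{t.bekolle}.
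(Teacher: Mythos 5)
Your argument is correct and is exactly the route the paper intends: the corollary is deduced from Theorem \ref{t.bekolle} via the pointwise domination $|P_\alpha f|\le |c_\alpha|\,P_\alpha^+ f$, which the paper treats as immediate and does not even write out. Your extra care about the branch of $(z-\bar\xi)^{2+\alpha}$ (using that $z-\bar\xi$ stays in the upper half plane and that the exponent is real, so the modulus is $|z-\bar\xi|^{2+\alpha}$) is a harmless and correct elaboration of the same step.
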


We remark that the estimate is sharp. In Section 5 we will show examples of weights and functions where the sharp bound is attained for $P_{\alpha}$.

The question is motivated by the recent developments on the $A_{2}$-Conjecture for singular integrals in the setting of Muckenhoupt weighted $L^{p}$ spaces. The search for sharp estimates in terms of the Muckenhoupt $A_p(w)$ constant  (defined analogously to the $B_{p, \alpha}(w)$ with the difference that the supremum over all cubes, rather than just Carleson squares, is taken) has received a lot of attention lately. Sharp estimates for the norm of general  Calder\'on-Zygmund operators in terms of the $A_p(w)$ constant were ultimately found by Hyt{\"o}nen \cite{1007.4330}. For a full account of the history and the contributors to the now $A_2$-Theorem see e.g. \cites{MR1894362, 1007.4330, 1202.2824,1202.2229} and the references (of arbitrary order) therein.
 
In the Bergman setting very little has been known about the dependence of the operator norm on the B{\'e}koll{\'e} constant. The work of Alexandru and Constantin \cite{Aleman20122359} characterize the boundedness of the Bergman projection on weighted vector-valued spaces in terms of a vector-valued B{\'e}koll{\'e}-type condition. In \cite{Aleman20122359}, the authors managed to control the norm of the Bergman projection by the B{\'e}koll{\'e} constant associated to the weight $w$ raised to the power $5/2$. One has to take into account that the vector-valued case presents extra difficulties and at this point we do not know how to extent our methods to this more delicate setting.

Our proof strategy follows the ideas already used in the singular integral case. We focus on the case $p=2$, as the general case follows by an adaptation of Rubio de Francia's extrapolating method to our setting. First we construct a dyadic model that dominates the Bergman projection. Here, one should note that the already obtained generalizations of the $A_{2}$-Theorem to metric  spaces \cite{1106.1342} do not cover this setting, since the $B_{p,\alpha}$ condition is only tested on Carleson squares. So the approach of dyadic models appears to be novel in this setting. On the other hand, our setting is much less delicate than that of the $A_2$-Theorem, since cancellation plays no role here. As in the case of the $A_{2}$-Theorem such sharp estimates can be obtained from two weight estimates, see \cite{MR2657437}. For this and connections with the Sarason Conjecture, see \cite{APR}. Here, we use the easier approach from Cruz-Uribe, Martell and P\'erez in \cites{MR2628851,MR2854179}.
As in the latest work of Lerner \cite{1202.2824} and Hyt{\"o}nen, Lacey and P\'erez \cite{1202.2229}, we only need finitely many dyadic grids. 

The paper is organized as follows: Section 2 includes some of the basic concepts and the results we are going to use throughout the paper. In Section 3 we present the dyadic operator and the Proposition \ref{p.pointwise}, that allows to control the Bergman projection by it. Section 4 is dedicated to the proof of the Main Theorem. Section 5 provides the reader with the examples that confirm sharpness of our estimates. The last section is dedicated to the bibliography.

\section{Basic Concepts}

We start this section by introducing the notation that will be relevant to us throughout these notes.
The space we will be working on is the upper half of the complex plane, $\mathcal H$. For an interval $I\subset \mathbb R$, we define the Carleson cube associated to $I$, which we denote by $Q_{I}$, and the top half associated to $I$, denoted by $T_{I}$, as

\begin{equation}
\label{e.cbox}
Q_{I}= I \times [0, |I|]; \quad T_{I}= I \times (|I|/2, |I|].
\end{equation}

$\mathcal D$ denotes the usual dyadic grid in $\mathbb R$, namely 
$\mathcal D = \{ \left.\left [ 2^{j}m, 2^{j}(m+1)\right.\right );\,\,\,m, j \in \mathbb Z \}$. In general, a dyadic grid in $\mathbb R$ is a collection of intervals such that for each interval $I$ we have the following,
\begin{enumerate}
\item The set $\{J\in \mathcal D \mid |J|=|I| \}$ forms a partition of $\mathbb R$.

\item The interval $I$ is the union of two intervals $I_{+}$ and $I_{-}$, and $|I_{\pm}|=\frac{1}{2} |I|$
\end{enumerate}

\begin{remark}
\label{r.tiling}
Notice that the family $\{T_{I}\}_{I\in \mathcal D}$, where $\mathcal D$ is a dyadic grid in $\mathbb R$, provides a tiling of the half plane $\mathcal H$.
\end{remark}

A weight $w$ will be a non-negative locally integrable function on the half plane $\mathcal H$. Let $M_{w,\alpha}$ be the dyadic maximal function associated to Carleson cubes on $\mathcal H$ with respect to measure $wdA_{\alpha}$, i.e, for $f\in L^{1}_{loc}(\mathcal H)$ and a dyadic grid $\mathcal D$ of $\mathbb R$ we define

\begin{equation}
\label{e.max}
M_{\alpha,w}f(z)= \sup_{I \in \mathcal D } \frac{1_{Q_{I}}}{|Q_{I}|_{w,\alpha}}\int_{Q_{I}} |f|wdA_{\alpha},
\end{equation}

where $1_{E}$ denotes the characteristic function of the set $E\subset \mathcal H $ and $|Q_{I}|_{w,\alpha}=\int_{Q_{I}} wdA_{\alpha}$.

The following classical result asserts the boundedness properties of the maximal operator defined above \eqref{e.max}.

\begin{theorem}
\label{t.mbounded}
Let $1<p<\infty$ and let $w$ be a weight. Then
\begin{equation}
\label{e.mbounded}
\norm M_{\alpha, w}f. L^{p}(wdA_{\alpha}).\leq C \norm f.L^{p}(wdA_{\alpha}).
\end{equation}
where the constant $C$ is independent of the weight $w$.
\end{theorem}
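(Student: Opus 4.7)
The plan is to run the standard weak-type plus interpolation argument for dyadic maximal functions, adapted to the measure $w\,dA_{\alpha}$. The structural input one needs is that the Carleson cubes $\{Q_{I}\}_{I\in\mathcal{D}}$ inherit a nested (dyadic) structure from the dyadic intervals: if $I,J\in\mathcal{D}$ then $Q_{I}\cap Q_{J}\neq\emptyset$ forces one of $Q_{I},Q_{J}$ to contain the other, exactly as for $I,J$ themselves. This is immediate from the definition $Q_{I}=I\times[0,|I|]$.

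\textbf{Step 1: weak type $(1,1)$ with respect to $w\,dA_{\alpha}$.} Fix $\lambda>0$ and let $E_{\lambda}=\{z\in\mathcal{H}\,:\,M_{\alpha,w}f(z)>\lambda\}$. For each $z\in E_{\lambda}$ there is some $I\in\mathcal{D}$ with $z\in Q_{I}$ and $\frac{1}{|Q_{I}|_{w,\alpha}}\int_{Q_{I}}|f|w\,dA_{\alpha}>\lambda$. Using the nesting above, one selects the \emph{maximal} such Carleson cube containing each $z$, producing a pairwise disjoint family $\{Q_{I_{k}}\}_{k}$ that still covers $E_{\lambda}$. Summing the defining inequality across this disjoint cover gives
\[
|E_{\lambda}|_{w,\alpha}\leq\sum_{k}|Q_{I_{k}}|_{w,\alpha}\leq\frac{1}{\lambda}\sum_{k}\int_{Q_{I_{k}}}|f|w\,dA_{\alpha}\leq\frac{1}{\lambda}\int_{\mathcal{H}}|f|w\,dA_{\alpha},
\]
so $M_{\alpha,w}$ is of weak type $(1,1)$ on $L^{1}(w\,dA_{\alpha})$ with constant $1$.

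\textbf{Step 2: interpolation.} The trivial pointwise bound $M_{\alpha,w}f(z)\leq\|f\|_{L^{\infty}(w\,dA_{\alpha})}$ gives the $L^{\infty}\to L^{\infty}$ estimate with constant $1$. Feeding this together with Step 1 into the Marcinkiewicz interpolation theorem yields \eqref{e.mbounded} for every $1<p<\infty$ with a constant $C=C_{p}$ depending only on $p$; in particular independent of $w$.

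\textbf{Main obstacle.} The only subtlety is the selection of maximal cubes in Step 1: since the Carleson cubes $Q_{I}$ can grow arbitrarily large, one must check that for each $z\in E_{\lambda}$ there is actually a largest $Q_{I}\ni z$ on which the average exceeds $\lambda$. This is standard bookkeeping: truncate $f$ to a compact subset of $\mathcal{H}$ so that the averages on $Q_{I}$ tend to $0$ as $|I|\to\infty$, then pass to the limit via monotone convergence. No cancellation is used and no fine property of the weight is invoked beyond local integrability.
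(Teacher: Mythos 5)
Your argument is correct in substance, but it is worth noting that the paper does not actually prove Theorem \ref{t.mbounded}: it simply cites the classical fact (Folland) that the Hardy--Littlewood maximal operator with respect to an arbitrary measure $\mu$ is bounded on $L^{p}(\mu)$ with a constant depending only on $p$ and the dimension. You instead give a self-contained proof, and your route is in fact the more natural one here: because the Carleson cubes $Q_{I}$, $I\in\mathcal D$, form a \emph{nested} family (as you correctly observe from $Q_{I}=I\times[0,|I|]$ and the dyadic structure of the $I$'s), no Besicovitch or Vitali covering lemma is needed --- the selection of maximal cubes immediately yields a disjoint cover and the weak $(1,1)$ bound with constant $1$, after which Marcinkiewicz interpolation with the trivial $L^{\infty}$ endpoint gives a constant depending only on $p$. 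This is exactly why the constant is independent of $w$ and of $\alpha$, which is the point the paper emphasizes. The one place where your write-up is slightly off is the ``main obstacle'': truncating $f$ to a compact set does \emph{not} force the averages over $Q_{I}$ to tend to $0$ as $|I|\to\infty$, since $|Q_{I}|_{w,\alpha}$ may remain bounded (e.g.\ if $w\,dA_{\alpha}$ is a finite measure, in which case every sufficiently small $\lambda$ admits no maximal cube at all). The standard and correct fix is to truncate in scale: restrict the supremum in \eqref{e.max} to $|I|\leq 2^{N}$, where maximal cubes trivially exist, obtain the weak bound uniformly in $N$, and let $N\to\infty$ by monotone convergence. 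With that adjustment the proof is complete.
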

For our purposes it is important that the constant $C$ does not depend on the weight $w$. Note that this follows directly from the fact that the Hardy-Littlewood maximal function is bounded on $L^{p}(\mathbb R^{n}, \mu )$ for any $1<p<\infty$ and $n\in \mathbb N$, with a constant only depending on $p$ and $n$. For a proof of this classical result see, for instance, \cite{MR767633}.

In the sequel, the numbers $p,p'$ will satisfy $1<p,p'<\infty$ and $\frac{1}{p}+\frac{1}{p'}=1$. 
Throughout the paper and abusing the notation, we will use the same symbol $w$ for a measure and its Lebesgue density.

The concept of $B_{2,\alpha}$ weight can be extended to any pair of weights $w$ and $\sigma$. We define it below, as we will make use of it in Section 4 when describing the extrapolation algorithm (see \eqref{e.pasoint}).

\begin{definition}
\label{d.jointb2}
Let $w$ and $\sigma$ be two weights. We say the weights $(w,\sigma)$ belong to the joint $B_{2,\alpha}$ class with associated constant $B_{2,\alpha}(w,\sigma)$ if
\begin{equation}
\label{e.jointbekolle}
B_{2, \alpha}(w,\sigma):=\sup_{I\subset \mathbb R, \text{ interval}}\frac{|Q_{I}|_{w,\alpha}}{|Q_{I}|_{\alpha}} \frac{|Q_{I}|_{\sigma^{-1}, \alpha}}{|Q_{I}|_{\alpha}}<\infty.
\end{equation}
\end{definition}

\section{A dyadic model for the maximal Bergman projection}

In this section we will use shifts of the usual dyadic grid in $\mathbb R$ to dominate the maximal Bergman projection by a finite family of discrete dyadic operators. The idea of considering shifts of the original dyadic grid to control non-dyadic objects (space norms, operators) by dyadic ones is well known to Harmonic Analysts and goes back to Garnett and Jones \cite{MR658065} and Christ \cite{MR951506} independently, we also refer the reader to the work of Tao Mei \cite{MR1993970}. In very recent work on the improvement of the $A_2$ Theorem (\cite{1202.2824}, \cite{1202.2229}), the authors avoid the use of Hyt\"onen's original probabilistic arguments and random dyadic grids and consider instead a finite family of shifted dyadic grids.  In the same spirit, we aim to obtain similar results for the maximal Bergman projection. 

Let us consider the following system of dyadic grids \\
$\mathcal D^{\beta}:= \left\{ 2^{j}([0,1)+ m + (-1)^{j}\beta): m\in \mathbb Z, j\in \mathbb Z  \right\}$ for $\beta\in \{0,1/3\}$. These systems have been recently considered by Hyt{\"o}nen and P\'erez \cite{1103.5562}, Lerner \cite{1202.2824} and Lacey, Hyt{\"o}nen and P\'erez \cite{1202.2229} in their simplified versions of the $A_2$ Theorem. The following lemma is a well-known fact, we include the proof here for the sake of completeness.

\begin{lemma}\label{l.grids}
Let $I$ be any interval in $\mathbb R$. Then there exists an interval $K\in \mathcal D^{\beta}$ for some $\beta\in \{0,1/3\}$ such that $I\subset K$ and $|K|\leq 8|I|$.
\end{lemma}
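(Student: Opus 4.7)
The plan is to pick a dyadic scale at which intervals of $\mathcal D^0$ or $\mathcal D^{1/3}$ have length comparable to (but large enough relative to) $|I|$, and then use the fact that the two grids are sufficiently offset from each other that $I$ can avoid being split by at least one of them.

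First I would fix $j\in\mathbb Z$ so that $4|I|\le 2^j<8|I|$; this is the unique integer in the required range, and $2^j\le 8|I|$ is already the desired size estimate for $K$. Next, I would write down explicitly the level-$j$ endpoints of both grids: $\mathcal D^0$ has partition points $\{m\cdot 2^j\mid m\in\mathbb Z\}$, while $\mathcal D^{1/3}$ has partition points $\{m\cdot 2^j+(-1)^j 2^j/3\mid m\in\mathbb Z\}$. The key numerical observation is that, at this level, the distance from any partition point of $\mathcal D^0$ to the nearest partition point of $\mathcal D^{1/3}$ equals exactly $2^j/3$.

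Now I would use the containment-avoidance dichotomy. Since $|I|\le 2^j/4<2^j/3$, the interval $I$ cannot contain in its interior a pair consisting of one $\mathcal D^0$ endpoint and one $\mathcal D^{1/3}$ endpoint simultaneously. Hence at least one of the following holds: (a) $I$ contains no level-$j$ endpoint of $\mathcal D^0$, in which case $I$ lies entirely within one level-$j$ interval of $\mathcal D^0$; or (b) $I$ contains no level-$j$ endpoint of $\mathcal D^{1/3}$, in which case $I$ lies entirely within one level-$j$ interval of $\mathcal D^{1/3}$. Either way, the chosen interval $K$ has $|K|=2^j\le 8|I|$ and $I\subset K$, as required.

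There is no real obstacle here; the whole argument reduces to the arithmetic check that the offset $1/3$ together with the choice of scale forces the two grids to interlace with minimum spacing $2^j/3$, which is strictly larger than the length bound $2^j/4$ that $|I|$ satisfies. The factor $8$ in the statement is essentially sharp for this two-grid system: one needs the scale $2^j$ to exceed $3|I|$ for the interlacing argument to rule out a simultaneous split, and the smallest power of two guaranteed to sit in $(3|I|,\infty)\cap(-\infty,8|I|]$ gives the constant $8$.
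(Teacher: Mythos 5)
Your proof is correct and is essentially the paper's own argument: both select the dyadic scale $2^{j}\approx 8|I|$ (the paper takes $j=l+3$ with $2^{l}\le |I|\le 2^{l+1}$) and then exploit the fact that at this scale the partition points of $\mathcal D^{0}$ and $\mathcal D^{1/3}$ are separated by exactly $2^{j}/3>|I|$, so $I$ must avoid the endpoints of at least one of the two grids and hence sit inside a single interval of that grid. Only your closing aside overstates matters: the interlacing argument merely needs $2^{j}>3|I|$, and a power of two always lies in $(3|I|,6|I|]$, so this two-grid scheme actually yields the constant $6$; the factor $8$ is just the convenient power of two chosen in the paper, not a sharp feature of the construction.
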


\begin{proof}
For any interval $I$, we know there exists an integer $l$ such that $2^{l}\leq |I|\leq 2^{l+1}$. Suppose $I$ does not contain any point of the form $2^{l+3}m$, then there exists $K\in \mathcal D^{0}$, $K:= [ 2^{l+3}m_{0}, 2^{l+3}(m_{0}+1))$ for some $m_{0}\in \mathbb Z$ such that $I\subset K$ and $|K|=2^{l+3}\leq 8|I|$. 
Suppose on the contrary that $I$ contains a point of the form $2^{l+3}m_{1}$ for some integer $m_{1}$. We write the case when $l+3$ is even, the odd case is identical. Then $I$ cannot contain any point of the form $2^{l+3}(m+1/3)$ as if that was the case,
$2^{l+3}|m_{1}- (m+1/3)|\leq |I|$ and this inequality contradicts the fact that $|I|\leq 2^{l+1}$. So once again there exists $K\in \mathcal D^{1/3}$ such that $I\subset K$ and $|K|\leq 8|I|$.
\end{proof}

We are now ready to define the dyadic model operator that will be relevant to us, see also \cite{APR}.

\begin{definition}\label{d.dyadicmodel}
Let $\mathcal D^{\beta}$ be one of the dyadic grids in $\mathbb R$ described above. We define the positive dyadic operator
\begin{equation}\label{e.dyadicp}
Q_{\alpha}^{\beta} f=\sum_{I\in \mathcal D^{\beta}} \langle f, \frac{1_{Q_{I}}}{|I|^{2+\alpha}} \rangle_{\alpha} 1_{Q_{I}},
\end{equation}
where $\langle \cdot, \cdot \rangle_{\alpha}$ stands for the standard scalar product in $\mathcal H$ with respect to measure $dA_{\alpha}$.
\end{definition}

The following proposition proves the relation between the maximal Bergman projection and the dyadic operators described above.

\begin{proposition}\label{p.pointwise}
There exists $C$ such that for all $f\in L^{1}_{\textup{loc}}$, $f\geq 0$ and $z\in \mathcal H$
\begin{equation}
\label{e.pointwise}
P^{+}_{\alpha}f(z)\leq C \mathbb \sum_{\beta\in \{0,1/3\}} Q_{\alpha}^{\beta} f(z).
\end{equation}
\end{proposition}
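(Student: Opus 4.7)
The plan is to perform a dyadic annular decomposition of $\mathcal H$ around the reflected point $\bar z$, and then invoke Lemma \ref{l.grids} to cover each annulus, together with the base point $z$ itself, by a single Carleson box chosen from one of the two shifted grids $\mathcal D^0$ or $\mathcal D^{1/3}$.

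Fix $z = x + iy \in \mathcal H$. For $\xi = \xi_1 + i\xi_2 \in \mathcal H$ one has $|z-\bar\xi|^2 = (x-\xi_1)^2 + (y+\xi_2)^2 \geq y^2$, so the annuli $A_k := \{\xi \in \mathcal H : 2^k y \leq |z - \bar\xi| < 2^{k+1} y\}$, $k \geq 0$, partition $\mathcal H$. On $A_k$ the kernel is pointwise bounded by $(2^k y)^{-(2+\alpha)}$, so splitting the integral defining $P_\alpha^+ f(z)$ yields
$$P_\alpha^+ f(z) \leq \sum_{k \geq 0} \frac{1}{(2^k y)^{2+\alpha}} \int_{A_k} f \, dA_\alpha.$$

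For the geometric step, I would observe that any $\xi \in A_k$ satisfies both $|\xi_1 - x| < 2^{k+1} y$ and $\xi_2 < 2^{k+1} y$. Applying Lemma \ref{l.grids} to the interval $J_k := [x - 2^{k+1} y, x + 2^{k+1} y]$ of length $2^{k+2}y$, one obtains $\beta_k \in \{0, 1/3\}$ and $I_k \in \mathcal D^{\beta_k}$ with $J_k \subseteq I_k$ and $|I_k| \leq 2^{k+5} y$. Then the horizontal projection of $A_k$ lies in $J_k \subseteq I_k$, while the vertical extent $\xi_2 < 2^{k+1} y \leq |I_k|$ ensures $A_k \subseteq Q_{I_k}$; the same estimates place $z$ in $Q_{I_k}$.

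Combining $(2^k y)^{-(2+\alpha)} \leq 2^{5(2+\alpha)} |I_k|^{-(2+\alpha)}$ with the inclusion $A_k \subseteq Q_{I_k}$, each annular contribution is dominated by $2^{5(2+\alpha)}\, 1_{Q_{I_k}}(z)\, |I_k|^{-(2+\alpha)} \langle f, 1_{Q_{I_k}}\rangle_\alpha$, which is precisely a single summand appearing in $Q_\alpha^{\beta_k} f(z)$. Since the grids $\mathcal D^0$ and $\mathcal D^{1/3}$ are disjoint and the scales $|I_k| \in [2^{k+2}y, 2^{k+5}y]$ allow only a bounded number of indices $k$ to reuse the same interval, nonnegativity of the summands gives $P_\alpha^+ f(z) \leq C\,(Q_\alpha^0 f(z) + Q_\alpha^{1/3} f(z))$ for some absolute constant $C$. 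The only real obstacle is the geometric step: Lemma \ref{l.grids} is the crucial input that lets a naturally chosen interval at scale $2^{k+1}y$ be absorbed into a grid interval at a fixed cost in length, so that one Carleson box simultaneously captures $z$ and the whole annulus $A_k$.
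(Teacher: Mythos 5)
Your proof is correct and rests on the same key ingredient as the paper's, namely Lemma \ref{l.grids} applied at the dyadic scale determined by $|z-\bar\xi|$; the paper organizes this as a pointwise domination of the kernel, showing $K_\alpha^+(z,\xi)\leq C\sum_\beta K_\alpha^\beta(z,\xi)$ for each fixed pair $(z,\xi)$, which makes the bounded-overlap count at the end of your argument unnecessary, but this is only a cosmetic difference. The one point where your annular version needs extra care --- that each grid interval can arise as $I_k$ for only boundedly many $k$, which you justify via $|I_k|\in[2^{k+2}y,\,2^{k+5}y]$ --- is handled correctly.
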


\begin{proof}
Let $K_{\alpha}^{+}$ be the kernel associated to the maximal Bergman projection and $K_{\beta, \alpha}$ be the kernel associated to $Q_{\alpha}^{\beta}$, i.e., for $z,\xi\in \mathcal H$

$$
K_{\alpha}^{+}(z,\xi)=\frac{1}{|z-\bar{\xi}|^{2+\alpha}} \,\, \text{and} \quad K_{\alpha}^{\beta}(z,\xi)=\sum_{I\in\mathcal D_{\beta}}\frac{1_{Q_{I}}(z)1_{Q_{I}}(\xi)}{|I|^{2+\alpha}}.
$$

Therefore it is enough to prove that for every $z,\xi\in \mathcal H$,

\begin{equation}
\label{e.pointwisek}
K_{\alpha}^{+}(z,\xi)\leq c_{2} \sum_{\beta\in \{0,1/3\}} K_{\alpha}^{\beta}(z,\xi),
\end{equation}
with $c_{2}$ independent of $z$ and $\xi$.
Let us fix $z,\xi \in \mathcal H$. Then there exists $l\in \zat$ such that
\begin{equation}
\label{e.pigeon}
2^{2l}\leq (\re z-\re \xi)^{2}+(\im z+\im \xi)^{2} \leq 2^{2l+2}.
\end{equation}

It is easy to see that there exists an interval $I$ (not necessarily dyadic) such that $2^{l-1}<|I|\leq 2^{l+1}$ and $z, \xi \in Q_{I}$. By Lemma $\ref{l.grids}$, we find $K\in \mathcal D^{\beta}$ for some $\beta\in \{0, 1/3\}$ such that $I\subset K$ and $|K|\leq 8|I|$. Now the proof of the theorem follows from the set of inequalities below:

\begin{eqnarray*}
\frac{1}{|z-\bar{\xi}|^{2+\alpha}}& \leq & \frac{1}{2^{l(2+\alpha)}}\\
 & \leq & 2^{(2+\alpha)} 8^{2+\alpha} \frac{1}{|K|^{2+\alpha}} \\
 & \leq & C_{\alpha}\sum_{\substack{I\in  \mathcal D^{\beta}\\ K\subset I}}\frac{1_{Q_{I}}(z)1_{Q_{I}}(\xi)}{|I|^{2+\alpha}}\\
 & \leq & C_{\alpha}  \sum_{\beta\in \{0,1/3\}} K_{\alpha}^{\beta}(z,\xi).
\end{eqnarray*}

\end{proof}

\section{Proof of the Main theorem}

The proof of the main theorem is a consequence of the corresponding result for the dyadic operators $Q_{\alpha}^{\beta}$ thanks to Proposition \ref{p.pointwise}.
\begin{theorem}\label{t.sharpdyad}
Let $w\in B_{2,\alpha}$ with associated constant $B_{2,\alpha}(w)$. Let $Q_{\alpha}^{\beta}$ 
be the dyadic operator described in Definition \ref{d.dyadicmodel} for some $\beta\in \{0,1/3\}$ and $-1<\alpha <\infty$. Then
$$
\norm Q_{\alpha}^{\beta} f.L^{2}(wdA_{\alpha}). \leq C_{\alpha}B_{2,\alpha}(w)\norm f.L^{2}(wdA_{\alpha}).   .
$$
\end{theorem}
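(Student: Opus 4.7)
The plan is to dualize and reduce everything to a purely pointwise estimate on $\mathcal H$, exploiting the fact that $\{T_I\}_{I\in\mathcal D^\beta}$ tiles $\mathcal H$ (Remark \ref{r.tiling}); no stopping-time or sparse decomposition will be necessary. Since $Q_\alpha^\beta$ is self-adjoint with respect to $dA_\alpha$, the substitution $f=F\sigma$, with $\sigma:=w^{-1}$, and $L^2(wdA_\alpha)$-duality reduce Theorem \ref{t.sharpdyad} to the bilinear estimate
\begin{equation*}
\Lambda(F,g)\;:=\;\sum_{I\in\mathcal D^\beta}\frac{\int_{Q_I}F\sigma\,dA_\alpha\cdot\int_{Q_I}gw\,dA_\alpha}{|I|^{2+\alpha}}\;\leq\;C_\alpha B_{2,\alpha}(w)\|F\|_{L^2(\sigma dA_\alpha)}\|g\|_{L^2(wdA_\alpha)}.
\end{equation*}

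The key device is to turn this sum into an integral. For each $z\in\mathcal H$ there is a unique $I(z)\in\mathcal D^\beta$ with $z\in T_{I(z)}$, so
\[
\sum_{I\in\mathcal D^\beta}b_I\;=\;\int_{\mathcal H}\frac{b_{I(z)}}{|T_{I(z)}|_\alpha}\,dA_\alpha(z)
\]
for every non-negative sequence $(b_I)$. Applied to the summand of $\Lambda$, together with the elementary comparisons $|T_I|_\alpha\approx|I|^{2+\alpha}\approx|Q_I|_\alpha$ (with constants depending only on $\alpha$), this produces
\[
\Lambda(F,g)\;\leq\; c_\alpha\int_{\mathcal H}\frac{|Q_{I(z)}|_{\sigma,\alpha}|Q_{I(z)}|_{w,\alpha}}{|Q_{I(z)}|_\alpha^{2}}\cdot\frac{1}{|Q_{I(z)}|_{\sigma,\alpha}}\!\int_{Q_{I(z)}}\!F\sigma\,dA_\alpha\cdot\frac{1}{|Q_{I(z)}|_{w,\alpha}}\!\int_{Q_{I(z)}}\!gw\,dA_\alpha\;dA_\alpha(z).
\]

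The leading factor is at most $B_{2,\alpha}(w)$ by the very definition of the B\'ekoll\'e constant. Since $z\in Q_{I(z)}$, the two averages are dominated pointwise at $z$ by the weighted dyadic maximal functions $M_{\alpha,\sigma}F(z)$ and $M_{\alpha,w}g(z)$ of \eqref{e.max}. Since $\sigma w=1$ we may factor $1=\sigma^{1/2}w^{1/2}$; Cauchy--Schwarz followed by Theorem \ref{t.mbounded} (whose constant is independent of the weight) gives
\[
\int_{\mathcal H}M_{\alpha,\sigma}F\cdot M_{\alpha,w}g\,dA_\alpha\;\leq\;\|M_{\alpha,\sigma}F\|_{L^2(\sigma dA_\alpha)}\|M_{\alpha,w}g\|_{L^2(wdA_\alpha)}\;\leq\;C\|F\|_{L^2(\sigma dA_\alpha)}\|g\|_{L^2(wdA_\alpha)},
\]
which is the required linear bound in $B_{2,\alpha}(w)$.

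The conceptual hurdle is recognizing that no stopping-time or sparse decomposition is required, even though $\Lambda$ sums over infinitely many nested scales. In the Muckenhoupt/Euclidean setting the analogous sum diverges and one is forced into Carleson or sparse-type arguments; here the tiling property of Remark \ref{r.tiling}, which has no Euclidean counterpart, lets us replace summation over scales by a single-scale integration over $\mathcal H$, after which the argument is entirely pointwise.
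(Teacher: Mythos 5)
Your proposal is correct and is essentially the paper's own argument: the same dual-weight substitution $f=F\sigma$ with $\sigma=w^{-1}$, the same extraction of the B\'ekoll\'e ratio from each term of the bilinear sum, and the same use of the tiling $\{T_I\}$ (with $|T_I|_\alpha\approx|Q_I|_\alpha$) to convert the sum into an integral dominated by $\int_{\mathcal H}M_{\sigma,\alpha}F\cdot M_{w,\alpha}g\,dA_\alpha$, finished by Cauchy--Schwarz and the weight-independent bound of Theorem \ref{t.mbounded}. The paper merely writes the tiling step as $\sum_I|T_I|_\alpha(\cdots)=\sum_I\int_{T_I}(\cdots)\,dA_\alpha$ rather than via your function $z\mapsto I(z)$; the two formulations are identical.
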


\begin{proof}
We claim that the following assertions are equivalent. The proof is an easy exercise that we are not going to include.
\begin{enumerate}
\item 
$$
Q_{\alpha}^{\beta} \mid L^{2}(wdA_{\alpha}) \mapsto L^{2}(wdA_{\alpha})
$$
\item 
$$
Q_{\alpha}^{\beta}(w^{-1}\cdot) \mid L^{2}(w^{-1}dA_{\alpha}) \mapsto L^{2}(wdA_{\alpha})
$$
\end{enumerate}
Moreover the norms of the two operators are equal.
With this in mind, we need to consider $\norm Q_{\alpha}^{\beta}(w^{-1}f). L^{2}(wdA_{\alpha}).$, for $f\in  L^{2}(w^{-1}dA_{\alpha})$. We use duality,
\begin{equation}
\label{e.duality}
\norm Q_{\alpha}^{\beta}(w^{-1}f). L^{2}(wdA_{\alpha}).=\sup_{\substack{0\leq g\in L^{2}(wdA_{\alpha})\\ \norm g.L^{2}(wdA_{\alpha}).=1 }}\int_{\mathcal H} Q_{\alpha}^{\beta}(w^{-1}f)gwdA_{\alpha}
\end{equation}

So let us study the right hand side of \eqref{e.duality}. Let  $0\leq f\in  L^{2}(w^{-1}dA_{\alpha})$ and $g\in L^{2}(wdA_{\alpha})$ as above. Then

\begin{eqnarray*}
& & \int_{\mathcal H} Q_{\alpha}^{\beta}(w^{-1}f)gwdA_{\alpha}\\
 &=& \sum_{I\in \mathcal D^{\beta}}\ip w^{-1}f, 1_{Q_{I}},_{\alpha} \ip wg, 1_{Q_{I}},_{\alpha}|Q_{I}|^{-1-\alpha/2}\\
 & \approx & \sum_{I\in \mathcal D^{\beta}}|Q_{I}|_{\alpha}\left( \frac{1}{|Q_{I}|_{w^{-1},\alpha}}\int_{Q_{I}} f w^{-1}dA_{\alpha}\right)\left(  \frac{1}{|Q_{I}|_{w,\alpha}}\int_{Q_{I}} g wdA_{\alpha} \right)  \frac{|Q_{I}|_{w^{-1},\alpha}}{|Q_{I}|_{\alpha}}  \frac{|Q_{I}|_{w,\alpha} }{|Q_{I}|_{\alpha}} \\
 & \leq & C_{\alpha} B_{2, \alpha}(w) \sum_{I\in \mathcal D^{\beta}}|T_{I}|_{\alpha} \left(\frac{1}{|Q_{I}|_{w^{-1},\alpha}}\int_{Q_{I}} f w^{-1}dA_{\alpha} \right) \left( \frac{1}{|Q_{I}|_{w,\alpha}}\int_{Q_{I}} g wdA_{\alpha}\right)\\
 & \leq  & C_{\alpha} B_{2, \alpha}(w) \sum_{I\in \mathcal D^{\beta}}\int_{T_{I}} \left(\frac{1}{|Q_{I}|_{w^{-1},\alpha}}\int_{Q_{I}} f w^{-1}dA_{\alpha}\right)  \left(\frac{1}{|Q_{I}|_{w,\alpha}}\int_{Q_{I}} g wdA_{\alpha}\right) dA_{\alpha}\\
 & \leq  & C_{\alpha} B_{2, \alpha}(w) \int_{\mathcal H} \left(M_{w^{-1},\alpha}f\right) \left (M_{w,\alpha}g\right) w^{-1/2} w^{1/2}dA_{\alpha}\\
 & \leq  & C_{\alpha} B_{2, \alpha}(w) \norm f. L^{2}(w^{-1}dA_{\alpha}). \norm g.L^{2}(wdA_{\alpha}).,\\
\end{eqnarray*}

where for the last inequality we have used the estimates in \eqref{e.mbounded}. In the previous one we have used Remark \ref{r.tiling} and H{\"o}lder's inequality with respect to measure $dA_{\alpha}$. The constant $C_{\alpha}$ is not necessarily the same one in each line of the previous inequalities, but it is ultimately a constant that only depends on $\alpha$.

\end{proof}

Notice that we have only proven estimates for $p=2$. The case $p\neq 2$ follows from the case $p=2$, i.e, Theorem \ref{t.sharpdyad}, and an adaptation of Rubio de Francia's extrapolation argument, see Proposition \ref{p.extrap} below. Rubio de Francia's extrapolation theorem provides a very powerful tool that reduces the weighted $L^p$ boundedness for weights in the Muckenhoupt $A_p$ class to the study of one special exponent $p_0$ (typically $p_0=2$). For further details on the theory of extrapolation, we refer the reader to Garc\'ia-Cuerva and Rubio de Francia's book \cite{MR807149} or the new book by Cruz-Uribe, Martell and P\'erez \cite{MR2797562}. 
A sharp version of Rubio de Francia's extrapolation theorem was obtained by Dragi{\v{c}}evi{\'c}, Grafakos, Pereyra, and Petermichl \cite{MR2140200}. Below is an adaptation of their proof to our setting.

Let us consider the case $p>2$. The case $1<p<2$ follows from the following remark: 
\begin{remark}
\label{r.consid}
Let us $p$ and $p'$ be dual exponents. Then the use of duality allow us to conclude the following facts:
\begin{enumerate}
\item
$ 
\norm P_{\alpha}.L^{p}(wdA^{\alpha})\mapsto L^{p}(wdA^{\alpha}).= \norm P_{\alpha}.L^{p'}(w^{1-p'}dA^{\alpha})\mapsto L^{p}(w^{1-p'}dA^{\alpha}).,
$
\item $B_{p, \alpha}(w)^{\frac{1}{p-1}}= B_{p',\alpha}(w^{1-p'})$.
\end{enumerate}
\end{remark}

We are now ready to state and prove the extrapolation theorem.
\begin{proposition}
\label{p.extrap}
Let $T$ be an operator such that 
\begin{equation}
\label{e.b2true}
\norm T.L^{2}(wdA_{\alpha})\mapsto  L^{2}(wdA_{\alpha}).\leq C B_{2,\alpha}(w) \quad \text{for all } w\in B_{2,\alpha},
\end{equation}
then 
\begin{equation}
\label{e.bptrue}
\norm T.L^{p}(wdA_{\alpha})\mapsto  L^{p}(wdA_{\alpha}).\leq \tilde{C} B_{p,\alpha}(w) \quad \text{for all } w\in B_{p,\alpha} \text{ and } p>2,
\end{equation}
where $C, \tilde{C}$ are constants independent of the weights $w$.
\end{proposition}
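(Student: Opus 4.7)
The plan is to run a sharp Rubio de Francia extrapolation in the spirit of Dragi{\v{c}}evi{\'c}--Grafakos--Pereyra--Petermichl, adapted to the Bekoll{\'e} setting. Fix $w\in B_{p,\alpha}$ with $p>2$ and $f\in L^{p}(wdA_{\alpha})$. The first step is to linearise the $L^{p}$ norm via $L^{p/2}$-duality,
\[
\norm Tf.L^{p}(wdA_{\alpha}).^{2}=\sup_{h}\int_{\mathcal H}(Tf)^{2}\,h\,w\,dA_{\alpha},
\]
with the supremum taken over $h\geq 0$ with $\norm h.L^{(p/2)'}(wdA_{\alpha}).=1$, so that it suffices to estimate $\int(Tf)^{2}hw\,dA_{\alpha}$ uniformly in such $h$.

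For a fixed such $h$, I would apply the Rubio de Francia iteration built from the weighted dyadic maximal operator $M_{w,\alpha}$ on Carleson cubes. By Theorem~\ref{t.mbounded} its norm $K:=\norm M_{w,\alpha}.L^{(p/2)'}(wdA_{\alpha}).$ depends only on $p$ and $\alpha$, so the series
\[
\mathcal R h:=\sum_{k=0}^{\infty}\frac{(M_{w,\alpha})^{k}h}{(2K)^{k}}
\]
converges in $L^{(p/2)'}(wdA_{\alpha})$ and produces a majorant satisfying $h\leq\mathcal Rh$, $\norm \mathcal Rh.L^{(p/2)'}(wdA_{\alpha}).\leq 2$, and the ``$A_{1}$-with-respect-to-$w$'' property $M_{w,\alpha}(\mathcal Rh)\leq 2K\,\mathcal Rh$ pointwise.

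Setting $W:=\mathcal Rh\cdot w$ gives $hw\leq W$, hence $\int(Tf)^{2}hw\,dA_{\alpha}\leq\int(Tf)^{2}W\,dA_{\alpha}$. Applying the $L^{2}$ hypothesis to $T$ against the weight $W$ yields
\[
\int(Tf)^{2}W\,dA_{\alpha}\leq C\,B_{2,\alpha}(W)^{2}\int f^{2}W\,dA_{\alpha},
\]
while H{\"o}lder's inequality with exponents $p/2$ and $(p/2)'$, together with $\norm \mathcal Rh.L^{(p/2)'}(wdA_{\alpha}).\leq 2$, bounds $\int f^{2}W\,dA_{\alpha}=\int f^{2}\mathcal Rh\cdot w\,dA_{\alpha}$ by $2\norm f.L^{p}(wdA_{\alpha}).^{2}$. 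Combining these estimates and taking the supremum over $h$ reduces the proposition to showing $B_{2,\alpha}(W)\leq C_{p}B_{p,\alpha}(w)$ uniformly in $h$.

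This last reduction is the crux and the main obstacle. The $A_{1}$-with-respect-to-$w$ property of $\mathcal Rh$ gives immediately $B_{2,\alpha}(W)\leq 2KB_{2,\alpha}(w)$, but for $p>2$ the quantity $B_{2,\alpha}(w)$ can be far larger than $B_{p,\alpha}(w)$ (in fact infinite when $w\in B_{p,\alpha}\setminus B_{2,\alpha}$), so the naive choice $W=\mathcal Rh\cdot w$ is not sharp. The plan to overcome this is to refine $W$ by combining two Rubio de Francia iterations---one with $M_{w,\alpha}$ acting on $h$, and one with the maximal operator associated with $\sigma=w^{1-p'}$ acting on a suitable auxiliary function---or, equivalently, to invoke a Jones-type factorisation of $B_{p,\alpha}$ weights into $B_{1,\alpha}$ factors, so that the local Bekoll{\'e} functional of the refined weight can be majorised by $(|Q_{I}|_{w,\alpha}/|Q_{I}|_{\alpha})(|Q_{I}|_{\sigma,\alpha}/|Q_{I}|_{\alpha})^{p-1}\leq B_{p,\alpha}(w)$ rather than by the $B_{2,\alpha}$ expression. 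Once that estimate is in hand, the chain closes and yields $\norm Tf.L^{p}(wdA_{\alpha}).\leq\tilde CB_{p,\alpha}(w)\norm f.L^{p}(wdA_{\alpha}).$.
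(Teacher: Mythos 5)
Your setup (the $L^{(p/2)'}$ duality and a Rubio de Francia iteration) is the right skeleton, and you correctly diagnose that iterating the $w$-weighted maximal operator $M_{w,\alpha}$ cannot work, since it only yields $B_{2,\alpha}(\mathcal Rh\cdot w)\lesssim B_{2,\alpha}(w)$, which is useless for $w\in B_{p,\alpha}\setminus B_{2,\alpha}$. But at exactly that point the argument stops: the ``combine two iterations or invoke a Jones-type factorisation'' step is a plan, not a proof, and it is the entire content of the proposition. Nothing in the proposal produces a concrete majorant $H\geq h$ with $\lVert H\rVert_{L^{(p/2)'}(wdA_{\alpha})}\lesssim 1$ and $B_{2,\alpha}(Hw)\lesssim B_{p,\alpha}(w)$, and without that the chain does not close. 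Moreover, a qualitative Jones factorisation would not obviously give the \emph{linear} dependence on $B_{p,\alpha}(w)$ that the statement asserts; the power bookkeeping is delicate and must be tracked.

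The paper's resolution (following Dragi{\v{c}}evi{\'c}--Grafakos--Pereyra--Petermichl) is to iterate not a maximal operator but the nonlinear map $S_{w,\alpha}(h)=\bigl(M_{\alpha}(h^{1/\phi(p)}w)/w\bigr)^{\phi(p)}$ with $\phi(p)=\tfrac{p-2}{p-1}$, where $M_{\alpha}$ is the \emph{unweighted} maximal function over Carleson boxes. Two sharp inputs then do the work you left open: first, Buckley's bound $\lVert M_{\alpha}\rVert_{L^{p}(wdA_{\alpha})\to L^{p}(wdA_{\alpha})}\leq B_{p,\alpha}(w)^{1/(p-1)}$ gives $\lVert S_{w,\alpha}\rVert\leq B_{p,\alpha}(w)^{\phi(p)}$, so the iteration $D(h)=\sum_{k}2^{-k}S_{w,\alpha}^{k}(h)/\lVert S_{w,\alpha}\rVert^{k}$ converges; second, a direct H\"older computation on each Carleson box shows the joint two-weight bound $B_{2,\alpha}(hw,\,S_{w,\alpha}(h)w)\leq B_{p,\alpha}(w)^{1/(p-1)}$. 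Combining this with the pointwise invariance $S_{w,\alpha}(D(h))\leq 2\lVert S_{w,\alpha}\rVert D(h)$ yields $B_{2,\alpha}(D(h)w)\leq 2\,B_{p,\alpha}(w)^{\phi(p)}\,B_{p,\alpha}(w)^{1/(p-1)}=2\,B_{p,\alpha}(w)$, i.e.\ exactly the linear dependence needed. If you want to complete your proof, you should replace your $\mathcal R$ built from $M_{w,\alpha}$ by this $S_{w,\alpha}$-iteration and prove the joint $B_{2,\alpha}$ estimate; as written, the proposal has a genuine gap at its central step.
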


\begin{proof}
We first notice that given the maximal function $M_{\alpha}$ with respect to measure $dA_{\alpha}$, i.e., 
$$
M_{\alpha}f(z)= \sup_{I\subset \mathbb R \text{ interval }  } \frac{1_{Q_{I}}}{\qal}\int_{Q_{I}} |f|dA_{\alpha}
$$
and $w\in B_{p,\alpha}$, then 
$$
M_{\alpha}\mid \lpwal \mapsto \lpwal,
$$
moreover 
\begin{equation}
\label{e.sharpalmax}
\norm M_{\alpha}.\lpwal \mapsto \lpwal .\leq B_{p,\alpha}(w)^{\frac{1}{p-1}}.
\end{equation} 

In the case of Muckenhoupt $A_p$ weights, with the measure $dA_{\alpha}$ replaced by Lebesgue measure and the usual Hardy-Littlewood Maximal operator in $\mathbb R^{n}$, the result was proven by Buckley \cite{MR1124164}. There exists a very elementary proof of \eqref{e.sharpalmax} due to Lerner \cite{MR2399047} that can be easily updated to this setting. We leave the details to the interested reader.

Letting $p>2$, we find sharp estimates for $Tf$ using \eqref{e.b2true}. Note that
\begin{equation}\label{e.ext1}
\norm Tf.L^{p}(wdA_{\alpha}). ^{2}= \sup_{\substack{0\leq h \\ \norm h.L^{p'/\phi(p)}(wdA_{\alpha}).=1}}\int_{\mathcal H}|Tf|^{2} h wdA_{\alpha},
\end{equation}
where $\phi(p)=\frac{p-2}{p-1}$.

We define $S_{w,\alpha}(h)=\left(\frac{M_{\alpha}(|h|^{1/\phi(p)}w)}{w}\right)^{\phi(p)}$. Then we use \eqref{e.sharpalmax} to conclude that $S_{w,\alpha}\mid L^{p'/\phi(p)}(wdA_{\alpha}) \mapsto L^{p'/\phi(p)}(wdA_{\alpha})$, moreover we have
\begin{equation}
\label{e.norms}
\norm S_{w,\alpha}. L^{p'/\phi(p)}(wdA_{\alpha}) \mapsto L^{p'/\phi(p)}(wdA_{\alpha}). \leq B_{p,\alpha}(w)^{\phi(p)}.
\end{equation}

We define the operator 
$$
D(h)=\sum_{k=0}^{\infty}\frac{1}{2^{k}}\frac{S_{w,\alpha}^{k}(h)}{\norm S_{w,\alpha}.L^{p'/\phi(p)}(wdA_{\alpha}).^{k}}
$$

We have the following properties associated to $D(h)$,
\begin{enumerate}
\item $|h|\leq D(h)$
\item $\norm D(h).L^{p'/\phi(p)}(wdA_{\alpha}).\leq 2 \norm h.L^{p'/\phi(p)}(wdA_{\alpha}).$
\item $S_{w,\alpha}(D(h))\leq 2 \norm S_{w,\alpha}.L^{p'/\phi(p)}(wdA_{\alpha}). D(h)$ and moreover
\begin{equation}
\label{e.b2bp}
B_{2,\alpha}(D(h)w)\leq c B_{p,\alpha}(w).
\end{equation}
\end{enumerate}
We only need to verify \eqref{e.b2bp}, as the other properties follow immediately by the definition of $D(h)$ and the subadditivity of $S_{w,\alpha}$. We first claim that for any $h\geq 0$,
\begin{equation}
\label{e.pasoint}
B_{2,\alpha}(hw, S_{w,\alpha}(h)w)\leq B_{p,\alpha}(w)^{1-\phi(p)}=B_{p,\alpha}(w)^{\frac{1}{p-1}},
\end{equation}
where we remind the reader that the joint $B_{2,\alpha}$ constant appearing on the left hand side of \eqref{e.pasoint} was previously defined in Definition \ref{d.jointb2}.
The proof of the claim goes as follows. Let $I$ be any interval in $\mathbb R$ and let $Q_{I}$ be the Carleson box associated to it. We call 
$$
B_{2,\alpha}(hw, S_{w,\alpha}(h)w)(I):=\left(\frac{1}{\qal}\int_{Q_{I}}hwdA_{\alpha}\right) \left(\frac{1}{\qal}\int_{Q_{I}}( S_{w,\alpha}(h)w)^{-1}  dA_{\alpha}\right).
$$
Then 
\begin{eqnarray*}
B_{2,\alpha}(hw, S_{w,\alpha}(h)w)(I) & = & 
\left(\frac{1}{\qal}\int_{Q_{I}}hwdA_{\alpha}\right) \left(\frac{1}{\qal}\int_{Q_{I}}( M_{\alpha}(h^{1/\phi(p)}w))^{-\phi(p)}w^{1-p'} dA_{\alpha}\right)\\
&\leq & \left(\frac{1}{\qal}\int_{Q_{I}}h^{1/\phi(p)}wdA_{\alpha}\right)^{\phi(p)} \left(\frac{1}{\qal}\int_{Q_{I}}wdA_{\alpha}\right)^{\frac{1}{p-1}}\\
&       & \times  \left(\frac{1}{\qal}\int_{Q_{I}}h^{1/\phi(p)}wdA_{\alpha}\right)^{-\phi(p)} \left(\frac{1}{\qal}\int_{Q_{I}}w^{1-p'} dA_{\alpha}\right)\\
&\leq & B_{p,\alpha}(w)^{\frac{1}{p-1}},
\end{eqnarray*}
where we have used that $1-\phi(p)=p'-1$, H{\"o}lder's inequality and the fact that for $z\in Q_{I}$, 
$$
M_{\alpha}(h^{1/\phi(p)}w)(z)\geq \frac{1}{\qal}\int_{Q_{I}}h^{1/\phi(p)}wdA_{\alpha}.
$$

We now use (3) and estimate \eqref{e.pasoint} to prove \eqref{e.b2bp},

\begin{eqnarray*}
B_{2,\alpha}(D(h)w)&= &\sup_{I\subset \mathbb R, \text{ interval}}\left(\frac{1}{|Q_{I}|_{\alpha}}\int_{Q_{I}}D(h)wdA_{\alpha}\right) \left(\frac{1}{|Q_{I}|_{\alpha}}\int_{Q_{I}}(D(h)w)^{-1}dA_{\alpha}\right)\\
&\leq & 2 \norm S_{w,\alpha}.L^{p'/\phi(p)}(wdA_{\alpha}). B_{2,\alpha}(D(h)w,S_{w,\alpha}(D(h)w)\\
 & \leq & 2 \norm S_{w,\alpha}.L^{p'/\phi(p)}(wdA_{\alpha}). B_{p,\alpha}(w)^{\frac{1}{p-1}}\\
 &\leq & CB_{p,\alpha}(w).
\end{eqnarray*}

We conclude the proof of the proposition using the hypothesis \eqref{e.b2bp}. Using \eqref{e.ext1}, it is enough to consider the estimate below for $h\geq 0,\, h\in L^{p'/\phi(p)}(wdA_{\alpha})$, $\norm h.L^{p'/\phi(p)}(wdA_{\alpha}).=1$,

\begin{eqnarray*}
\int_{\mathcal H}|Tf|^{2} h wdA_{\alpha}&\leq & \int_{\mathcal H}|Tf|^{2} D(h) wdA_{\alpha}\\
 &\leq & C B_{2,\alpha}(D(h)w)^{2}\int_{\mathcal H}|f|^{2}D(h) wdA_{\alpha}\\
 &\leq & C B_{p,\alpha}(w)^{2}\norm f.L^{p}(wdA_{\alpha}).^{2} \norm D(h).L^{(p/2)'}(wdA_{\alpha}).\\
 &\leq & C B_{p,\alpha}(w)^{2}\norm f.L^{p}(wdA_{\alpha}).^{2} \norm D(h).L^{p'/\phi(p)}(wdA_{\alpha}).\\
 &\leq & C B_{p,\alpha}(w)^{2}\norm f.L^{p}(wdA_{\alpha}).^{2}.
\end{eqnarray*}

\end{proof}

\section{Sharp examples}

It is enough to consider the case $1<p\leq 2$. The other case follows from duality considerations, see Remark \ref{r.consid}.

Let $1<p\leq 2$, $-1<\alpha<\infty$ and $0<\delta<1$. We consider the weight $w(z)=|z|^{(\alpha+2)(p-1)(1-\delta)}$. The reader can easily convince himself that $w\in B_{p, \alpha}$. Indeed, the interesting case is to check the Carleson squares for intervals centered at $0$, or more easily, semidiscs centered at $0$, which yields $B_{p, \alpha}(w)= C_{\alpha, p}\delta^{1-p}$. Let us now consider $f(z)=|z|^{(\alpha+2)(\delta-1)}1_{\{z\in \mathcal H \mid |z|\leq 1\}}(z)$, we see that $f\in L^{p}(wdA_{\alpha})$ and $\norm f. L^{p}(wdA_{\alpha}).^{p}=C_{\alpha}\frac{1}{\delta}$.
For fixed $\alpha$, there exists a constant $M_{\alpha}>0$ such that for any $z\in \mathcal H$, $|z|\geq M_{\alpha}$,  $\arg ((z-\bar{\xi_{1}})^{2+\alpha}, (z-\bar{\xi_{2}})^{2+\alpha} )\leq \pi/2$ for every $\xi_{1},\xi_{2}\in \mathcal H$ such that $|\xi_{1}|, |\xi_{2}|\leq 1$. Simple geometric considerations (see figure \ref{f.angle}) show that for example $M_{\alpha}= \left(\tan(\frac{\pi}{4}(2+\alpha)^{-1})\right)+1$ will do.

\begin{figure}[H]
\begin{tikzpicture}  
\draw  (-5,0) -- (5,0); 
\draw (1,0) arc [radius=1, start angle=0, end angle=180];
\draw (4,0) arc [radius=4, start angle=0, end angle=180];
\draw (-1,0) -- (0,4);
\draw (1,0) -- (0,4); 
\draw (-0.1,3.5) arc [radius=0.7, start angle=260, end angle=280];
\node at (0,4.4) {(0,$M_{\alpha}$)};
\node at (0,3) {$\theta_{\alpha}$};
\node at (1.4, -0.5) {$|z|=1$};
\node at (4.4, -0.5) {$|z|=M_{\alpha}$};

\end{tikzpicture} 
\caption{Angle of the cone}
\label{f.angle}
\end{figure}

 
Therefore if $f$ is supported on $\{z\in \mathcal H \mid |z|\leq 1\}$ and nonnegative, we have for $z\in \mathcal H$, $|z|\geq M_{\alpha}$:
\begin{equation}
\label{e.lowerb}
|P_{\alpha}(f)(z)|\geq C_{\alpha} \int_{\{\xi\in \mathcal H \mid |\xi|\leq 1\}}\frac{f(\xi)}{|z-\bar{\xi}|^{2+\alpha}}dA_{\alpha}(\xi).
\end{equation}
Moreover $|z-\bar{\xi}|^{2+\alpha}\leq 2^{2+\alpha}|z|^{2+\alpha}$. This estimate together with \eqref{e.lowerb} applied to our particular function $f(z)=|z|^{(\alpha+2)(\delta-1)}1_{\{z\in \mathcal H \mid |z|\leq 1\}}(z)$ leads us to 

\begin{equation}
\label{e.sharpex}
P_{\alpha}(f)(z)\geq C'_{\alpha} |z|^{-(2+\alpha)}\int_{\{\xi\in \mathcal H \mid |\xi|\leq 1\}}|\xi|^{(\alpha+2)(\delta-1)} dA_{\alpha}(\xi)\geq C_{\theta_{0},\alpha}  \frac{1}{\delta}|z|^{-(2+\alpha)}  .
\end{equation}

We use \eqref{e.sharpex} to get the desired estimate
$$
\norm P_{\alpha}(f).L^{p}(wdA_{\alpha}).^{p}\geq \tilde{C}_{\alpha}\delta^{-(p+1)}\approx C_{\alpha,p}B_{p, \alpha}(w)^{\frac{p}{p-1}}\norm f.L^{p}(wdA_{\alpha}).^{p}.
$$

\begin{bibsection}
\begin{biblist}

\bib{Aleman20122359}{article}{
title = {The Bergman projection on vector-valued -spaces with operator-valued weights},
author = {Aleman, Alexandru},
author = {Constantin, Olivia},
journal = {Journal of Functional Analysis},
volume = {262},
number = {5},
pages = {2359 - 2378},
year = {2012},
issn = {0022-1236},
url = {http://www.sciencedirect.com/science/article/pii/S0022123611004356},
}

\bib{APR}{article}{
author = {Aleman, Alexandru},
author= { Pott, Sandra},
author= {Reguera, Maria Carmen},
     TITLE = {Notes on the Sarason Conjecture on Bergman spaces},
      YEAR = {2012},
       journal = {Preprint},
}

\bib{MR667319}{article}{
   author={Bekoll{\'e}, David},
   title={In\'egalit\'e \`a poids pour le projecteur de Bergman dans la
   boule unit\'e de ${\bf C}^{n}$},
   language={French},
   journal={Studia Math.},
   volume={71},
   date={1981/82},
   number={3},
   pages={305--323},
   issn={0039-3223},
}

\bib{MR497663}{article}{
   author={Bekoll{\'e}, David},
   author={Bonami, Aline},
   title={In\'egalit\'es \`a poids pour le noyau de Bergman},
   language={French, with English summary},
   journal={C. R. Acad. Sci. Paris S\'er. A-B},
   volume={286},
   date={1978},
   number={18},
   pages={A775--A778},
   issn={0151-0509},
}

\bib{MR1124164}{article}{
    AUTHOR = {Buckley, Stephen M.},
     TITLE = {Estimates for operator norms on weighted spaces and reverse
              {J}ensen inequalities},
   JOURNAL = {Trans. Amer. Math. Soc.},
    VOLUME = {340},
      YEAR = {1993},
    NUMBER = {1},
     PAGES = {253--272},
      ISSN = {0002-9947},
       URL = {http://dx.doi.org/10.2307/2154555},
}

\bib{MR951506}{article}{
   author={Christ, Michael},
   title={Weak type $(1,1)$ bounds for rough operators},
   journal={Ann. of Math. (2)},
   volume={128},
   date={1988},
   number={1},
   pages={19--42},
   issn={0003-486X},
}

\bib{MR2854179}{article}{
    AUTHOR = {Cruz-Uribe, David},
    author={Martell, Jos{\'e} Mar{\'{\i}}a},
    author={ P{\'e}rez, Carlos},
     TITLE = {Sharp weighted estimates for classical operators},
   JOURNAL = {Adv. Math.},
    VOLUME = {229},
      YEAR = {2012},
    NUMBER = {1},
     PAGES = {408--441},
      ISSN = {0001-8708},
       URL = {http://dx.doi.org/10.1016/j.aim.2011.08.013},
}

\bib{MR2628851}{article}{
    AUTHOR = {Cruz-Uribe, David},
    author={Martell, Jos{\'e} Mar{\'{\i}}a},
    author={ P{\'e}rez, Carlos},
     TITLE = {Sharp weighted estimates for approximating dyadic operators},
   JOURNAL = {Electron. Res. Announc. Math. Sci.},
    VOLUME = {17},
      YEAR = {2010},
     PAGES = {12--19},
      ISSN = {1935-9179},
       URL = {http://dx.doi.org/10.3934/era.2010.17.12},
}

\bib{MR2797562}{book}{
   author={Cruz-Uribe, David V.},
   author={Martell, Jos{\'e} Maria},
   author={P{\'e}rez, Carlos},
   title={Weights, extrapolation and the theory of Rubio de Francia},
   series={Operator Theory: Advances and Applications},
   volume={215},
   publisher={Birkh\"auser/Springer Basel AG, Basel},
   date={2011},
   pages={xiv+280},
   isbn={978-3-0348-0071-6},
}

\bib{MR2140200}{article}{
    AUTHOR = {Dragi{\v{c}}evi{\'c}, Oliver},
    author=  {Grafakos, Loukas},
    author= {Pereyra, Mar{\'{\i}}a Cristina}, 
    author= {Petermichl, Stefanie},
     TITLE = {Extrapolation and sharp norm estimates for classical operators
              on weighted {L}ebesgue spaces},
   JOURNAL = {Publ. Mat.},
    VOLUME = {49},
      YEAR = {2005},
    NUMBER = {1},
     PAGES = {73--91},
      ISSN = {0214-1493},
}

\bib{MR767633}{book}{
   author={Folland, Gerald B.},
   title={Real analysis},
   series={Pure and Applied Mathematics (New York)},
   place={New York},
   date={1984},
   pages={xiv+350},
   isbn={0-471-80958-6},
}

\bib{MR807149}{book}{
    AUTHOR = {Garc{\'{\i}}a-Cuerva, Jos{\'e}},
    author= {Rubio de Francia, Jos{\'e} L.},
     TITLE = {Weighted norm inequalities and related topics},
    SERIES = {North-Holland Mathematics Studies},
    VOLUME = {116},
      NOTE = {Notas de Matem{\'a}tica [Mathematical Notes], 104},
 PUBLISHER = {North-Holland Publishing Co.},
   ADDRESS = {Amsterdam},
      YEAR = {1985},
     PAGES = {x+604},
      ISBN = {0-444-87804-1},
}

\bib{MR658065}{article}{
    AUTHOR = {Garnett, John B.},
    author= {Jones, Peter W.},
     TITLE = {B{MO} from dyadic {BMO}},
   JOURNAL = {Pacific J. Math.},
    VOLUME = {99},
      YEAR = {1982},
    NUMBER = {2},
     PAGES = {351--371},
      ISSN = {0030-8730},
       URL = {http://projecteuclid.org/getRecord?id=euclid.pjm/1102734020},
}

\bib{MR1758653}{book}{
   author={Hedenmalm, Haakan},
   author={Korenblum, Boris},
   author={Zhu, Kehe},
   title={Theory of Bergman spaces},
   series={Graduate Texts in Mathematics},
   volume={199},
   publisher={Springer-Verlag},
   place={New York},
   date={2000},
   pages={x+286},
   isbn={0-387-98791-6},
}

\bib{1007.4330}{article}{
    AUTHOR = {Hyt{\"o}nen, Tuomas},
     TITLE = {The sharp weighted bound for general {C}alder{\'o}n-{Z}ygmund operators},
 JOURNAL = {Annals of Mathematics},
    VOLUME = {175},
      YEAR = {2012},
    NUMBER = {3},
     PAGES = {1473-1506},
       URL = {http://annals.math.princeton.edu/2012/175-3/p09},
}

\bib{1202.2229}{article}{
    AUTHOR = {Hyt{\"o}nen, Tuomas},
    author= {Lacey, Michael},
    author={ P\'erez, Carlos},
     TITLE = {Non-probabilistic proof of the $A_2$ theorem, and sharp weighted bounds for the q-variation of singular integrals},
      YEAR = {2012},
       eprint = {http://arxiv.org/abs/1202.2229}
}

\bib{1103.5562}{article}{
    AUTHOR = {Hyt{\"o}nen, Tuomas},
    author= {P\'erez, Carlos},
     TITLE = {Sharp weighted bounds involving $A_\infty$},
      YEAR = {2011},
       eprint = {http://arxiv.org/abs/1103.5562},
}

\bib{MR2657437}{article}{
   author={Lacey, Michael T.},
   author={Petermichl, Stefanie},
   author={Reguera, Maria Carmen},
   title={Sharp $A_2$ inequality for Haar shift operators},
   journal={Math. Ann.},
   volume={348},
   date={2010},
   number={1},
   pages={127--141},
   issn={0025-5831},
}

\bib{MR2399047}{article}{
    AUTHOR = {Lerner, Andrei K.},
     TITLE = {An elementary approach to several results on the
              {H}ardy-{L}ittlewood maximal operator},
   JOURNAL = {Proc. Amer. Math. Soc.},
    VOLUME = {136},
      YEAR = {2008},
    NUMBER = {8},
     PAGES = {2829--2833},
      ISSN = {0002-9939},
       URL = {http://dx.doi.org/10.1090/S0002-9939-08-09318-0},
}

\bib{1202.2824}{article}{
    AUTHOR = {Lerner, Andrei},
     TITLE = {A simpler proof of the $A_{2}$ Conjecture},
      YEAR = {2012},
       eprint = {http://arxiv.org/abs/1202.2824}
}

\bib{MR1993970}{article}{
   author={Mei, Tao},
   title={BMO is the intersection of two translates of dyadic BMO},
   language={English, with English and French summaries},
   journal={C. R. Math. Acad. Sci. Paris},
   volume={336},
   date={2003},
   number={12},
   pages={1003--1006},
   issn={1631-073X},
}

\bib{1106.1342}{article}{
    AUTHOR = {Nazarov, Federov},
    author= {Reznikov, Alexander},
    author={ Volberg, Alexander},
     TITLE = {The proof of $A_2$ conjecture in a geometrically doubling metric space},
      YEAR = {2011},
       eprint = {http://arxiv.org/abs/1106.1342},
}

\bib{MR1894362}{article}{
   author={Petermichl, Stefanie},
   author={Volberg, Alexander},
   title={Heating of the Ahlfors-Beurling operator: weakly quasiregular maps
   on the plane are quasiregular},
   journal={Duke Math. J.},
   volume={112},
   date={2002},
   number={2},
   pages={281--305},
   issn={0012-7094},
}

\end{biblist}
\end{bibsection}

\end{document}